\documentclass[12pt]{article}

\usepackage[a4paper, top=3cm, left=3cm, right=2cm, bottom=2cm]{geometry}
\usepackage{amsmath, amsthm}
\usepackage{amssymb}
\usepackage{multicol}
\usepackage{amsfonts}
\usepackage{graphics, color, enumerate, fancyhdr, dsfont}

\overfullrule=5pt

%



    \newcommand{\Q}{\mathbb{Q}}

    \newcommand{\R}{\mathbb{R}}




    \newcommand{\ran}{\mbox{\rm ran}}
















    \newcommand{\Cwf}{\mathcal{C}}
    \newcommand{\Dwf}{\mathcal{D}}


    \newcommand{\menos}{\smallsetminus}

    \newcommand{\frestr}{\!\!\upharpoonright\!\!}
















\title{Coding Polish spaces}
\author{Diego Alejandro Mej\'ia}
\date{\small Faculty of Science\\ Shizuoka University\\ 836 Ohya, Suruga-ku, 422-8529 Shizuoka, Japan\\ \texttt{diego.mejia@shizuoka.ac.jp}}

\begin{document}

\makeatletter
\def\@roman#1{\romannumeral #1}
\makeatother

\theoremstyle{plain}
  \newtheorem{theorem}{Theorem}[section]
  \newtheorem{corollary}[theorem]{Corollary}
  \newtheorem{lemma}[theorem]{Lemma}
  \newtheorem{prop}[theorem]{Proposition}
  \newtheorem{claim}[theorem]{Claim}
  \newtheorem{exer}[theorem]{Exercise}
\theoremstyle{definition}
  \newtheorem{definition}[theorem]{Definition}
  \newtheorem{example}[theorem]{Example}
  \newtheorem{remark}[theorem]{Remark}
  \newtheorem{context}[theorem]{Context}
  \newtheorem{question}[theorem]{Question}
  \newtheorem{problem}[theorem]{Problem}
  \newtheorem{notation}[theorem]{Notation}

\maketitle

\newcommand{\la}{\langle}
\newcommand{\ra}{\rangle}
\newcommand{\id}{\mathrm{id}}
\newcommand{\sig}{\boldsymbol{\Sigma}}
\newcommand{\cosig}{\boldsymbol{\Pi}}

\newcommand{\leqdi}{\preceq_{\mathrm{di}}}
\newcommand{\eqdi}{\approx_{\mathrm{di}}}
\newcommand{\leqcdi}{\preceq_{\mathrm{cdi}}}
\newcommand{\eqcdi}{\approx_{\mathrm{cdi}}}
\newcommand{\eqPc}{\approx_{\mathrm{P}}}

\begin{abstract}
   We use countable metric spaces to code Polish metric spaces and evaluate the complexity of some statements about these codes and of some relations that can be determined by the codes. Also, we propose a coding for continuous functions between Polish metric spaces.
\end{abstract}

\section{Introduction}\label{SecIntro}

A \emph{Polish metric space} is a separable complete metric space $\la X,d\ra$ and a \emph{Polish space} is a topological space $X$ which is homeomorphic to some Polish metric space (in the first notion the complete metric is required). As any Polish metric space is the completion of a countable metric space and the latter can be coded by reals in $\R^{\omega\times\omega}$, we can use such reals to code Polish metric spaces. This coding was used by Clemens \cite{Clemens} to formalize the \emph{isometry relation} and to study other equivalence relations that can be reduced to that one.

In this paper, we take a closer look to this coding and study the complexity of some statements about codes, some of them characterizing relations between Polish metric spaces. In particular, we provide a different proof of \cite[Lemma 4]{Clemens} that states that the isometry relation is analytic (Theorem \ref{codesamePolishAnalytic}(f)). We also code continuous functions between Polish metric spaces by \emph{Cauchy-continuous} functions between the corresponding separable metric spaces and, like in the case of Polish metric spaces, we study the complexity of some statements about this coding. This allows us to prove that the \emph{homeomorphic relation} between codes is $\sig^1_2$ (Corollary \ref{codesamePolishSigma1-2}). The contents of this work is the starting point of research for describing certain aspects of descriptive set theory (like category and measure) by the coding presented in this paper.

We fix some notation. Given two metric spaces $\la X,d\ra$ and $\la X',d'\ra$, say that a function $\iota:\la X,d\ra\to\la X',d'\ra$ is an \emph{isometry} if, for all $x,y\in X$, $d(x,y)=d(f(x),f(y))$ (we do not demand an isometry to be onto). Additionaly, we say that $\iota$ is an \emph{isometrical isomorphism} if it is onto, for which case we say that the metric spaces $\la X,d\ra$ and $\la X',d'\ra$ are \emph{isometrically isomorphic}.

We structure this paper as follows. In Section \ref{SecMetric} we review some general aspects about completions of metric spaces. Afterwards, in Section \ref{SecPolish}, we introduce the coding for Polish metric spaces and look at the complexity of statements concerning these codes. Section \ref{SecFunc} is dedicated to the theory of codes for continuous functions between Polish metric spaces, plus some general facts about functions between metric spaces and their completions.

\section{Completion of metric spaces}\label{SecMetric}

\begin{definition}\label{Defcompletion}
   Let $\la X,d\ra$ be a metric space. Say that $\la X^*,d^*,\iota\ra$ is a \emph{completion of $\la X,d\ra$} if $\la X^*,d^*\ra$ is a complete metric space and $\iota:\la X,d\ra\to\la X^*,d^*\ra$ is a \emph{dense isometry}, that is, an isometry such that $\iota[X]$ is dense in $X^*$.
\end{definition}

Note that $d^*$ is determined by $\iota$ and $d$ because $d^*(z,z')=\lim_{n\to+\infty}d(x_n,x'_n)$ for arbitrary Cauchy sequences $\la x_n\ra_{n<\omega}$ and $\la x'_n\ra_{n<\omega}$ in $X$ such that their images on $X^*$ converge to $z$ and $z'$, respectively. It is well known that every metric space has a completion, for example, the space of its Cauchy sequences. 

Given a metric space $\la X,d\ra$ and an isometry $\iota:\la X,d\ra\to\la X^*,d^*\ra$ into a complete metric space $\la X^*,d^*\ra$, say that \emph{$\la X^*,d^*,\iota\ra$ commutes diagrams of isometries from $\la X,d\ra$} if, for any isometry $f:\la X,d\ra\to\la Y,d'\ra$ into a complete metric space $\la Y,d'\ra$, there is a unique continuous function $\hat{f}:\la X^*,d^*\ra\to\la Y,d'\ra$ such that $f=\hat{f}\circ\iota$. As a characterization of completeness of a metric space, it is well known that $\la X^*,d^*,\iota\ra$ is a completion of $\la X,d\ra$ iff it commutes diagrams of isometries, even more, such a completion is unique modulo isometries (see Lemma \ref{ComplExt}). Moreover, a completion commutes diagrams of much less than isometries.

\begin{definition}\label{DefCauchycont}
   A function $f:\la X,d\ra\to\la Y,d'\ra$ between metric spaces is \emph{Cauchy-continuous} if, for any Cauchy sequence $\la x_n\ra_{n<\omega}$ in $\la X,d\ra$, $\la f(x_n)\ra_{n<\omega}$ is a Cauchy sequence in $\la Y,d'\ra$.
\end{definition}

Clearly, any Cauchy-continuous function is continuous and any uniformly continuous function is Cauchy continuous. Also, if $f:\la X,d\ra\to\la Y,d'\ra$ is a function between metric spaces with $\la X,d\ra$ complete, then $f$ is continuous iff it is Cauchy-continuous.

\begin{theorem}\label{ComplExt}
   Let $\la X_0,d_0,\iota\ra$ be a completion of the metric space $\la X,d\ra$ and let $f:\la X,d\ra\to\la Y,d'\ra$ be a continuous function into a complete metric space $\la Y,d'\ra$.
   \begin{enumerate}[(a)]
      \item There is at most one continuous function $\hat{f}:X_0\to Y$ such that $f=\hat{f}\circ\iota$.
      \item $\hat{f}$ as in (a) exists iff $f$ is Cauchy-continuous.
      \item If $f$ is Cauchy-continuous, then
          \begin{enumerate}[({c}-1)]
             \item $\hat{f}$ is uniformly continuous iff $f$ is.
             \item $\hat{f}$ is an isometry iff $f$ is.
             \item $\hat{f}$ is an isometrical isomorphism iff $f$ is a dense isometry.
          \end{enumerate}
      \item If $\la X_1,d_1,\iota_1\ra$ commutes diagrams of isometries from $\la X,d\ra$, then there is a unique isometrical isomorphism $\iota^*:\la X_0,d_0\ra\to\la X_1,d_1\ra$ such that $\iota_1=\iota^*\circ\iota$. In particular, $\la X_1,d_1,\iota_1\ra$ is a completion of $\la X,d\ra$.
   \end{enumerate}
\end{theorem}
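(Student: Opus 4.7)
The plan is to prove parts (a)--(d) in order, each leveraging the density of $\iota[X]$ in $X_0$.

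For (a), if $\hat{f}_1,\hat{f}_2:X_0\to Y$ are both continuous extensions, they agree on the dense subset $\iota[X]$, and two continuous functions into a Hausdorff space that agree on a dense set are equal.

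The main obstacle is part (b). The forward direction is a one-liner: given $\hat{f}$ and a Cauchy sequence $\la x_n\ra_{n<\omega}$ in $X$, the sequence $\la\iota(x_n)\ra_{n<\omega}$ is Cauchy in $X_0$, hence convergent by completeness, and continuity of $\hat{f}$ makes $\la f(x_n)\ra_{n<\omega}=\la\hat{f}(\iota(x_n))\ra_{n<\omega}$ convergent, hence Cauchy. For the reverse direction, given $z\in X_0$ I would pick by density any sequence $\la x_n\ra_{n<\omega}$ in $X$ with $\iota(x_n)\to z$; this sequence is Cauchy in $X$ because $\iota$ is an isometry, so by Cauchy-continuity $\la f(x_n)\ra_{n<\omega}$ is Cauchy in the complete space $Y$ and has a unique limit, which I set to be $\hat{f}(z)$. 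I then need to check independence from the choice of witnessing sequence (by interleaving two such sequences into a single Cauchy sequence and invoking Cauchy-continuity), that $\hat{f}\circ\iota=f$ (via constant sequences), and continuity of $\hat{f}$ through an $\varepsilon/2$-approximation argument using $\iota[X]$ and the continuity of $d'$. The well-definedness step is the one I expect to need most care.

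Part (c) will follow a common pattern in each of the three equivalences. The forward directions are automatic: $f=\hat{f}\circ\iota$ inherits any uniform-type property or isometry-ness of $\hat{f}$, and in (c-3), $f[X]=\hat{f}[\iota[X]]$ is dense in $Y$ when $\hat{f}$ is onto. For the reverse directions I would use density of $\iota[X]$ in $X_0$ together with continuity of $d'$: for (c-1), a uniform $\delta$ for $f$ lifts to $\hat{f}$ by approximating arbitrary points of $X_0$ by points of $\iota[X]$; (c-2) is the special case where $\delta=\varepsilon$; and for (c-3), given that $f$ is a dense isometry, $\hat{f}$ is already an isometry by (c-2), so $\hat{f}[X_0]$ is closed in $Y$ (the isometric image of a complete space is complete, hence closed) and contains the dense set $f[X]$, hence equals $Y$.

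For (d), I would apply part (b) to the isometry $\iota_1:X\to X_1$ (which is Cauchy-continuous into the complete space $X_1$) to obtain a continuous $\iota^*:X_0\to X_1$ with $\iota_1=\iota^*\circ\iota$, and apply the commuting-diagrams hypothesis for $\la X_1,d_1,\iota_1\ra$ to the isometry $\iota:X\to X_0$ to obtain a continuous $\hat{\iota}:X_1\to X_0$ with $\iota=\hat{\iota}\circ\iota_1$. Then $\hat{\iota}\circ\iota^*$ and $\iota^*\circ\hat{\iota}$ each satisfy the equation defining the identity extension on their respective spaces, so by the uniqueness clauses (from (a) and from the hypothesis on $\la X_1,d_1,\iota_1\ra$), they equal $\id_{X_0}$ and $\id_{X_1}$. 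Thus $\iota^*$ is a bijection and, by (c-2), an isometry, hence an isometrical isomorphism. Finally, $\iota_1[X]=\iota^*[\iota[X]]$ is the image of a dense set under the homeomorphism $\iota^*$ and therefore dense in $X_1$, giving the ``in particular'' assertion.
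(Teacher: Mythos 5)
Your overall architecture coincides with the paper's proof: (a) by density plus Hausdorffness, (b) by the limit construction with the interleaving trick for well-definedness, (c-1) by an $\varepsilon/3$ approximation, (c-3) by completeness of the isometric image, and (d) by extending $\iota_1$ via (b)/(c-2), producing the reverse map from the commuting-diagrams hypothesis, and cancelling via the uniqueness clauses. All of that is sound and is essentially what the paper does.

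The one step that would fail as described is your treatment of (c-2). You say it is ``the special case \lbrack of (c-1)\rbrack{} where $\delta=\varepsilon$,'' but being an isometry is the exact two-sided identity $d_0(z_0,z_1)=d'(\hat{f}(z_0),\hat{f}(z_1))$, whereas the (c-1) argument with $\delta=\varepsilon$ only yields a one-sided implication of the form $d_0(z_0,z_1)<\varepsilon/3\Rightarrow d'(\hat{f}(z_0),\hat{f}(z_1))<\varepsilon$; no choice of modulus in a uniform-continuity argument can recover the lower bound $d'(\hat{f}(z_0),\hat{f}(z_1))\geq d_0(z_0,z_1)$. The correct argument (and the one the paper gives) is the limit computation you already have the ingredients for: pick sequences $\iota(x^e_n)\to z_e$ and use continuity of both metrics to get
\[
d_0(z_0,z_1)=\lim_{n\to+\infty}d(x^0_n,x^1_n)=\lim_{n\to+\infty}d'(f(x^0_n),f(x^1_n))=d'(\hat{f}(z_0),\hat{f}(z_1)).
\]
Since (c-2) is what your proof of (c-3) and of (d) leans on, this substitution is needed, though it costs nothing beyond the tools you already invoke.
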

\begin{proof}
  \begin{enumerate}[(a)]
     \item Because $\iota[X]$ is dense in $X_0$.
     \item If $\hat{f}$ exists then it is Cauchy-continuous. As $\iota$ is Cauchy-continuous, then so is $f$.

         For the converse, we first show how to define $\hat{f}$. Given $x\in X_0$, find a sequence $\bar{x}=\la x_n\ra_{n<\omega}$ in $X$ such that $\lim_{n\to+\infty}\iota(x_n)=x$. Clearly, $\bar{x}$ is a Cauchy sequence and, as $f$ is Cauchy-continuous, $\la f(x_n) \ra_{n<\omega}$ is a Cauchy sequence in $Y$ so, by completeness, it converges in $Y$ to a point we define as $\hat{f}(x)$. Note that this point does not depend on the choice of $\bar{x}$ because, if $\bar{y}$ is another Cauchy sequence in $X$ such that $\lim_{n\to+\infty}d(x_n,y_n)=0$, then $\la x_0,y_0,x_1,y_1,\ldots\ra$ is a Cauchy sequence in $X$ and $\la f(x_0),f(y_0),f(x_1),f(y_1)\ldots\ra$ is a Cauchy sequence in $Y$, so both sequences $\la f(x_n)\ra_{n<\omega}$ and $\la f(y_n)\ra_{n<\omega}$ converge to the same point. Clearly, $f=\hat{f}\circ\iota$.

         To see the continuity of $\hat{f}$, assume that $\la x'_n\ra_{n<\omega}$ is a sequence in $X_0$ that converges to $x\in X_0$. By the definition of $\hat{f}$, for each $n<\omega$ we can find an $x_n\in X$ such that $d'(f(x_n),\hat{f}(x'_n))<2^{-(n+1)}$ and $d_0(\iota(x_n),x'_n)<2^{-(n+1)}$. Clearly, $\la\iota(x_n)\ra_{n<\omega}$ converges to $x$, so $\la f(x_n)\ra_{n<\omega}$ converges to $\hat{f}(x)$ by definition of $\hat{f}$. Therefore, $\la\hat{f}(x'_n)\ra_{n<\omega}$ converges to $\hat{f}(x)$.
     \item As $\iota$ is uniformly continuous, it is clear that $f$ is uniformly continuous if $\hat{f}$ is. For the converse, assume that $f$ is uniformly continuous and let $\varepsilon>0$. Then, there is a $\delta>0$ such that, for all $x_0,x_1\in X$, $d(x_0,x_1)<\delta$ implies $d'(f(x_0),f(x_1))<\frac{\varepsilon}{3}$. Assume that $z_0,z_1\in X_0$ and $d_0(z_0,z_1)<\frac{\delta}{3}$. For each $e=0,1$ find an $x_e\in X$ so that $d_0(\iota(x_e),z_e)<\frac{\delta}{3}$ and $d'(f(x_e),\hat{f}(z_e))<\frac{\varepsilon}{3}$. Thus $d_0(\iota(x_0),\iota(x_1))<\delta$, that is, $d(x_0,x_1)<\delta$. Then $d'(f(x_0),f(x_1))<\frac{\varepsilon}{3}$, which implies $d'(\hat{f}(z_0),\hat{f}(z_1))<\varepsilon$.

         To see (c-2), as $\iota$ is an isometry, it is clear that $f$ is an isometry if $\hat{f}$ is. For the converse, assume that $f$ is an isometry and let $x_0,x_1\in X_0$. For each $e=0,1$ find a sequence $\la x^e_n\ra_{n<\omega}$ in $X$ so that $\lim_{n\to+\infty}\iota(x^e_n)=x_e$. By continuity of metrics, it is clear that
         \begin{multline*}d_0(x_0,x_1)=\lim_{n\to+\infty}d_0(\iota(x^0_n),\iota(x^1_n))=\lim_{n\to+\infty}d(x^0_n,x^1_n)
              =\lim_{n\to+\infty}d'(f(x^0_n),f(x^1_n))\\
              =d'(\hat{f}(x_0),\hat{f}(x_1))
         \end{multline*}
         the last equality because
         \[\lim_{n\to+\infty}f(x^e_n)=\lim_{n\to+\infty}\hat{f}(\iota(x^e_n))=\hat{f}(x_e).\]
         Finally, to prove (c-3), if $f$ is a dense isometry, then so is $\hat{f}$ because $f[X]=\hat{f}[\iota[X_0]]$ is dense in $Y$. But also $\la\hat{f}[X_0],d'\ra$ is a complete metric space because $\hat{f}$ is an isometry, therefore, this set is closed in $Y$. Thus, by density, it is equal to $Y$. The converse is straightforward.
     \item As $\iota_1:\la X,d\ra\to\la X_1,d_1\ra$ is an isometry, by (b) and (c-2) there is an isometry $\iota^*:X_0\to X_1$ such that $\iota_1=\iota^*\circ\iota$. On the other hand, there is a continuous function $\iota^{**}:X_1\to X_0$ such that $\iota=\iota^{**}\circ\iota_1$. Thus $\iota=(\iota^**\circ\iota^*)\circ\iota$ and $\iota_1=(\iota^*\circ\iota^{**})\circ\iota_1$. By uniqueness of the completion of the respective diagrams, $\iota^*$ is an homeomorphism and $(\iota^*)^{-1}=\iota^{**}$. Therefore, by (c-3), $\iota_1$ is a dense isometry.
  \end{enumerate}
\end{proof}


\section{Coding Polish metric spaces}\label{SecPolish}

We code all Polish metric spaces with countable metric spaces of the form $\la \eta,d\ra$ where $\eta\leq\omega$ is an ordinal.


\begin{definition}\label{DefPolishcode}
   Let $\la\eta,d\ra$ be a metric space where $\eta$ is an ordinal $\leq\omega$.
   \begin{enumerate}[(1)]
      \item When $\la X,d_X\ra$ is a Polish metric space, we say that \emph{$\la\eta,d\ra$ codes $\la X,d_X\ra$} if $\la X,d_X,\iota\ra$ is a completion of $\la\eta,d\ra$ for some $\iota$.
      \item When $X$ is a Polish space, we say that \emph{$\la\eta,d\ra$ codes $X$} if some (or any) completion of $\la\eta,d\ra$ is homeomorphic with $X$.
   \end{enumerate}
\end{definition}

\begin{example}\label{Exmcode}
  \begin{enumerate}[(1)]
     \item The Polish metric space $\la\R,d_\R\ra$ with the standard metric is coded by $\la\omega,d_\Q\ra$ (in the sense of (1)) where the metric $d_\Q$ makes the canonical bijection $\iota_\Q:\omega\to\Q$ an isometry onto $\la\Q,d_\R\frestr(\Q\times\Q)\ra$. As a consequence, $\la\omega,d_\Q\ra$ codes $\R$ as a Polish space (in the sense of Definition \ref{DefPolishcode}(2)).
     \item For $S:\omega\to(\omega+1)\menos\{0\}$ recall the complete metric $d_{\prod S}$ on $\prod S=\prod_{n<\omega}S(n)$ given by $d_{\prod S}(x,y)=2^{-\inf\{n<\omega:x(n)\neq y(n)\}}$, which is compatible with the product topology when each $S(n)$ is discrete. Here, $\la\prod S,d_{\prod S}\ra$ is coded by $\la\eta,d_{\Q^S}\ra$ where $\eta=|\Q^S|$ with $\Q^S$ the set of eventually zero sequences in $\prod S$ and $d_{\Q^S}$ the metric on $\eta$ so that the canonical bijection $\iota_{\Q^S}:\eta\to\Q^S$ is an isometry onto $\la\Q^S,d_{\prod S}\frestr(\Q^S\times\Q^S)\ra$.
     \item As a particular case of (2), consider $\bar{\omega}:\omega\to\{\omega\}$ the constant function on $\omega$, $d_{\Q^{\bar{\omega}}}\in\Dwf(\omega)$ and the dense isometry $\iota_{\Q^{\bar{\omega}}}:\la\omega,d_{\Q^{\bar{\omega}}}\ra\to\la\omega^\omega,d_{\prod\bar{\omega}}\ra$. This is an standard coding of the Baire space.
  \end{enumerate}
\end{example}

Though Polish metric spaces coded by the same $\la\eta,d\ra$ are isometrically isomorphic, homeomorphic codes do not lead to homeomorphic Polish spaces. For example, consider the metrics $d_1$ and $d_2$ on $\omega$ where $d_1$ is the discrete metric, that is, $d_1(n,m)=1$ if $n\neq m$ or 0 otherwise, and $d_2(n,m)=|2^{-n}-2^{-m}|$. Though both metrics are compatible to the discrete topology on $\omega$, the completion of $\la\omega,d_1\ra$ is itslef, while the completion of $\la\omega,d_2\ra$ is the ordinal $\omega+1$ (with the order topology).

Note that, if $X$ is a Hausdorff topological space which contains a dense finite set, then $X$ is finite with the discrete topology, so any finite Polish space is coded by a natural number (its size) with any metric. So we only need to concentrate on Polish spaces coded by a metric on $\omega$, that is, on infinite Polish spaces.

One interesting fact is to recognize when two countable metric spaces code the same Polish metric space.

\begin{lemma}\label{sameCompl}
   Let $\la X_0,d_0\ra$ and $\la X_1,d_1\ra$ be metric spaces. Then, both metric spaces have isometrically isomorphic completions iff there exists a metric space $\la \eta,d\ra$ where $\eta$ is a cardinal $\leq|X_0|+|X_1|$ and there are dense isometries $\iota_e:X_e\to \eta$ for each $e=0,1$.
\end{lemma}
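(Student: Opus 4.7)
The plan is to handle the two directions separately, with the easy direction using uniqueness of completions and the harder direction constructing $\la\eta,d\ra$ as a union of two copies inside a common completion.

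For the $(\Leftarrow)$ direction, suppose we are given the data $\la\eta,d\ra$ and dense isometries $\iota_e\colon X_e\to\eta$. First take any completion $\la\eta^*,d^*,j\ra$ of $\la\eta,d\ra$, which exists as remarked after Definition~\ref{Defcompletion}. Then $j\circ\iota_e\colon X_e\to\eta^*$ is an isometry, and I will check its image is dense in $\eta^*$: since $\iota_e[X_e]$ is dense in $\eta$ and $j$ is an isometry, $j[\iota_e[X_e]]$ is dense in $j[\eta]$, which in turn is dense in $\eta^*$. Hence $\la\eta^*,d^*,j\circ\iota_e\ra$ is a completion of $\la X_e,d_e\ra$ for both $e=0,1$, so by the uniqueness part of Theorem~\ref{ComplExt}(d) the completions of $\la X_0,d_0\ra$ and $\la X_1,d_1\ra$ are isometrically isomorphic.

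For the $(\Rightarrow)$ direction, let $\la X_e^*,d_e^*,j_e\ra$ be a completion of $\la X_e,d_e\ra$ for $e=0,1$ and let $\varphi\colon\la X_0^*,d_0^*\ra\to\la X_1^*,d_1^*\ra$ be an isometrical isomorphism. The key step is to form inside $X_0^*$ the set $Y:=j_0[X_0]\cup\varphi^{-1}[j_1[X_1]]$, so that $|Y|\leq|X_0|+|X_1|$. Let $\eta$ be the cardinal $|Y|$, fix any bijection $h\colon Y\to\eta$, and define the metric $d$ on $\eta$ by transport, i.e., $d(h(y),h(y')):=d_0^*(y,y')$, making $h$ an isometrical isomorphism. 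Put $\iota_0:=h\circ j_0$ and $\iota_1:=h\circ\varphi^{-1}\circ j_1$, which are isometries as compositions of isometries.

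It remains to verify that each $\iota_e[X_e]$ is dense in $\la\eta,d\ra$, or equivalently (via $h$) that each of $j_0[X_0]$ and $\varphi^{-1}[j_1[X_1]]$ is dense in $Y$ with the metric inherited from $X_0^*$. Density of $j_0[X_0]$ in $Y$ follows from its density in $X_0^*$, because every $y\in Y\subseteq X_0^*$ can be approximated by elements of $j_0[X_0]\subseteq Y$; the same argument applies to $\varphi^{-1}[j_1[X_1]]$, which is dense in $X_0^*=\varphi^{-1}[X_1^*]$ because $j_1[X_1]$ is dense in $X_1^*$ and $\varphi^{-1}$ is an isometrical isomorphism. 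I do not anticipate a real obstacle; the only point that needs care is the bookkeeping to ensure $\eta$ is a cardinal (not just a set) bounded by $|X_0|+|X_1|$, which is handled by collapsing $Y$ via the bijection $h$.
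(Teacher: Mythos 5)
Your proposal is correct and follows essentially the same route as the paper: for the forward direction, take the union of the two dense images inside a single common completion (the paper tacitly identifies the two completions, whereas you make the identification explicit via the isometrical isomorphism $\varphi$), transport the metric to the cardinal $\eta=|Y|$ by a bijection, and compose; for the backward direction, observe that any completion of $\la\eta,d\ra$ is simultaneously a completion of both spaces. Your density verifications just spell out details the paper leaves as ``is as desired.''
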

\begin{proof}
   Assume that, for each $e=0,1$, $\la X^*,d^*,\iota^*_e\ra$ is a completion of $\la X_e,d_e\ra$. Put $Y:=\iota_0[X_0]\cup\iota_1[X_1]$, $d_Y:=d^*\frestr(Y\times Y)$ and $\eta:=|Y|$. Let $g:Y\to\eta$ be a bijection and $d$ the metric on $\eta$ that makes $g$ an isometry. Thus, $\iota_e:=g\circ\iota^*_e$ is as desired.

   For the converse, assume we have such metric space $\la Y,d\ra$ and dense isometries $\iota_e$ for each $e=0,1$. It is clear that any completion of $\la Y,d\ra$ is a completion of both $\la X_0,d_0\ra$ and $\la X_1,d_1\ra$.
\end{proof}

\begin{corollary}\label{samePolishcode}
   Let $d_0$ and $d_1$ be metrics on $\omega$. The following statements are equivalent.
   \begin{enumerate}[(1)]
      \item $\la\omega,d_0\ra$ and $\la\omega,d_1\ra$ code isometrically isomorphic Polish metric spaces.
      \item There is a metric $d^*$ on $\omega$ and there is a dense isometry $\iota_e:\la\omega,d_e\ra\to \la\omega,d^*\ra$ for each $e=0,1$.
   \end{enumerate}
\end{corollary}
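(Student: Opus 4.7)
The plan is to derive this as a direct specialization of Lemma \ref{sameCompl} in the case $X_0 = X_1 = \omega$. The only technical point I need to handle is upgrading the cardinal $\eta$ produced by that lemma from an abstract cardinal $\leq |\omega| + |\omega|$ to $\omega$ itself, so that the two witnessing isometries land in a metric on $\omega$ as demanded by (2).

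For the direction (1) $\Rightarrow$ (2), I would apply Lemma \ref{sameCompl} to the metric spaces $\la\omega, d_0\ra$ and $\la\omega, d_1\ra$ to obtain a metric space $\la\eta, d\ra$ with $\eta$ a cardinal $\leq |\omega| + |\omega| = \aleph_0$ together with dense isometries $\iota_e: \la\omega, d_e\ra \to \la\eta, d\ra$ for $e = 0, 1$. Now any isometry is automatically injective, since $d_e(x,y) = d(\iota_e(x), \iota_e(y)) = 0$ forces $x = y$, and hence the existence of $\iota_0$ alone forces $|\eta| \geq \aleph_0$. Combining this with the upper bound yields $\eta = \omega$, and setting $d^* := d$ together with the same $\iota_0, \iota_1$ witnesses (2).

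For (2) $\Rightarrow$ (1), the converse half of Lemma \ref{sameCompl} applies verbatim, taking the intermediate metric space $\la Y, d\ra$ in that lemma to be $\la\omega, d^*\ra$ and using the supplied dense isometries $\iota_e$.

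No step presents a genuine obstacle; the proof is essentially bookkeeping on top of Lemma \ref{sameCompl}, with the only minor subtlety being the injectivity argument that promotes the cardinal $\eta$ from a generic $\leq \aleph_0$ to exactly $\omega$. If anything were to go wrong, it would be a confusion between the ordinal convention $\eta \leq \omega$ used in Definition \ref{DefPolishcode} and the cardinal convention in Lemma \ref{sameCompl}, but these agree on the relevant range so there is nothing to reconcile.
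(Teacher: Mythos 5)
Your proposal is correct and matches the paper's intent exactly: the corollary is stated without an explicit proof precisely because it is the specialization of Lemma \ref{sameCompl} to $X_0=X_1=\omega$, and your observation that injectivity of the isometry forces the cardinal $\eta$ up to $\omega$ is the only (routine) detail to supply.
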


Let $\Dwf(\omega)$ be the set of metrics on $\omega$. Note that $\Dwf(\omega)\subseteq\R^{\omega\times\omega}$, so we can say that infinite Polish spaces are coded by \emph{reals} corresponding to metrics on $\omega$. The previous lemma indicates that codes of the same Polish metric space enjoy an amalgamation property. Define the order $\leqdi$ on $\Dwf(\omega)$ as $d\leqdi d'$ iff there is a dense isometry $\iota:\la\omega,d\ra\to \la\omega,d'\ra$ (`di' stands for `dense isometry'). So what the previous result states is that two metric spaces $\la\omega,d\ra$ and $\la\omega,d'\ra$ code the same Polish metric spaces iff there is a $d^*\in\Dwf(\omega)$ such that $d,d'\leqdi d^*$. We denote this relation by $d\eqdi d'$.

In the following result we provide the complexity of some relevant statements concerning codes for Polish metric spaces.

\newcommand{\Img}{\mathrm{Img}}

\begin{theorem}\label{codesamePolishAnalytic}
  \begin{enumerate}[(a)]
     \item The family $\Dwf(\omega)$ of metrics on $\omega$ is $\cosig^0_1$ in $\R^{\omega\times\omega}$. In particular, $\Dwf(\omega)$ is a Polish space.
     \item The statement ``$x$ is dense in the metric space $\la\omega,d\ra$" is $\sig^0_2$ in $2^\omega\times\R^{\omega\times\omega}$.
     \item The statement ``$g:\la\omega,d\ra\to\la\omega,d'\ra$ is an isometry between metric spaces" is $\cosig^0_1$ in $\omega^\omega\times(\R^{\omega\times\omega})^2$.
     \item The function $\Img:2^\omega\times\omega^\omega\to2^\omega$ defined as $\Img(x,g)=g[x]$ is continuous.
     \item The relation $\leqdi$ is $\sig^1_1$ in $(\R^{\omega\times\omega})^2$.
     \item The relation $\eqdi$ is $\sig^1_1$ in $(\R^{\omega\times\omega})^2$.
  \end{enumerate}
\end{theorem}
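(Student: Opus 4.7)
The plan is to realize $\eqdi$ as a single existential quantification of a $\sig^1_1$ relation over the Polish space $\R^{\omega\times\omega}$, exploiting the amalgamation characterization already established. By Corollary \ref{samePolishcode}, $d \eqdi d'$ if and only if there exists $d^* \in \Dwf(\omega)$ with $d \leqdi d^*$ and $d' \leqdi d^*$. This exhibits $\eqdi$ as the projection onto the first two coordinates of the set
$$A := \bigl\{(d, d', d^*) \in (\R^{\omega\times\omega})^3 : d^* \in \Dwf(\omega),\ d \leqdi d^*,\ d' \leqdi d^*\bigr\}.$$

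Next I would verify that $A$ is $\sig^1_1$ by analysing each conjunct. By (a), the condition $d^* \in \Dwf(\omega)$ is $\cosig^0_1$, hence Borel. By (e), the relation $\leqdi$ is $\sig^1_1$ in $(\R^{\omega\times\omega})^2$; pulling it back through the continuous projections $(d, d', d^*) \mapsto (d, d^*)$ and $(d, d', d^*) \mapsto (d', d^*)$ yields two $\sig^1_1$ subsets of $(\R^{\omega\times\omega})^3$. Since $\sig^1_1$ is closed under finite intersections (and contains all Borel sets), $A$ itself is $\sig^1_1$. Finally, since $\sig^1_1$ is closed under projection along Polish coordinates, the existential quantification over $d^* \in \R^{\omega\times\omega}$ keeps the resulting relation $\sig^1_1$, which gives (f).

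There is no real obstacle here: the entire argument is a repackaging of (a), (e), and Corollary \ref{samePolishcode} using standard closure properties of the projective hierarchy. The only mild point of care is that the quantifier $\exists d^*$ must range over the whole Polish space $\R^{\omega\times\omega}$ rather than over the (non-open) subspace $\Dwf(\omega)$, which is why membership in $\Dwf(\omega)$ appears as one of the conjuncts in the matrix defining $A$ instead of as a restriction on the quantifier itself.
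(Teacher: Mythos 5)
Your argument for part (f) is correct and is exactly the route the paper intends: since the paper \emph{defines} $d\eqdi d'$ by the amalgamation property (``there is a $d^*\in\Dwf(\omega)$ with $d,d'\leqdi d^*$'', justified by Lemma \ref{sameCompl} and Corollary \ref{samePolishcode}), part (f) is an immediate projection of a $\sig^1_1$ set along the Polish coordinate $d^*$, using closure of $\sig^1_1$ under continuous preimages, finite intersections with Borel sets, and projections. Your remark about placing ``$d^*\in\Dwf(\omega)$'' in the matrix rather than restricting the quantifier is careful and correct (in fact it is redundant, since $d\leqdi d^*$ already entails $d^*\in\Dwf(\omega)$, and since $\Dwf(\omega)$ is closed by (a) one could also quantify over it directly as a Polish space).

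The gap is that you prove only (f) while taking (a)--(e) as given, and (e) is the one item for which the paper actually supplies an argument --- indeed the paper's entire written proof consists of establishing (e) by rewriting $d\leqdi d'$ as ``$d,d'\in\Dwf(\omega)$ and there exists $g\in\omega^\omega$ which is an isometry from $\la\omega,d\ra$ to $\la\omega,d'\ra$ such that $\Img(\omega,g)$ is dense in $\la\omega,d'\ra$'', which is $\sig^1_1$ as an existential real quantifier over a set that is Borel by (a)--(d). That reduction, together with the (elementary but not stated) verifications of (a)--(d) --- e.g., that the metric axioms are a countable conjunction of closed conditions, that density of $x$ is $\forall n\,\forall k\,\exists m\,(m\in x \wedge d(n,m)<2^{-k})$, hence $\cosig^0_2$ (note the paper's claim of $\sig^0_2$ for (b) deserves a second look), and that $\Img$ is continuous because each bit of $g[x]$ depends on finitely much of $(x,g)$ --- is where the substance of the theorem lies. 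So as a proof of the full statement your proposal is incomplete: it correctly supplies the step the paper leaves implicit, but omits the steps the paper actually carries out.
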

\begin{proof}
   $d\leqdi d'$ is equivalent to ``$d,d'\in\Dwf(\omega)$ and there exists an isometry $g:\la\omega,d\ra\to\la\omega,d'\ra$ so that $\Img(\omega,g)$ is dense in $\la\omega,d'\ra$", which is analytic by (a)-(d).
\end{proof}


Codes for perfect Polish spaces can also be classified.

\begin{lemma}\label{ComplIsol}
   Let $\la X,d\ra$ be a metric space and let $\la X^*,d^*,\iota\ra$ be its completion.
   \begin{enumerate}[(a)]
      \item If $z\in X^*$ is isolated, then $z\in\iota[X]$.
      \item $x\in X$ is isolated iff $\iota(x)$ is isolated in $X^*$.
      \item $X^*$ is perfect iff $X$ is perfect.
   \end{enumerate}
\end{lemma}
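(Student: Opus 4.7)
For (a), the plan is to unpack isolation and apply density directly. If $z\in X^*$ is isolated, choose $\varepsilon>0$ with $B_{d^*}(z,\varepsilon)=\{z\}$. Since $\iota[X]$ is dense in $X^*$, some $\iota(x)$ lies in $B_{d^*}(z,\varepsilon)$, forcing $\iota(x)=z$, so $z\in\iota[X]$.

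For (b), I would handle each direction separately, with the forward direction as the main step. Assume $x\in X$ is isolated and pick $\varepsilon>0$ with $B_d(x,\varepsilon)=\{x\}$. Since $\iota$ is an isometry (hence injective), this gives $\iota[X]\cap B_{d^*}(\iota(x),\varepsilon)=\{\iota(x)\}$. I claim in fact that $B_{d^*}(\iota(x),\varepsilon)=\{\iota(x)\}$ in $X^*$. Take any $z$ in this ball and choose a sequence $\la y_n\ra_{n<\omega}$ in $X$ with $\iota(y_n)\to z$, using density. For large $n$, the triangle inequality $d^*(\iota(y_n),\iota(x))\leq d^*(\iota(y_n),z)+d^*(z,\iota(x))$ puts $\iota(y_n)$ into $\iota[X]\cap B_{d^*}(\iota(x),\varepsilon)=\{\iota(x)\}$, so eventually $\iota(y_n)=\iota(x)$, whence $z=\iota(x)$. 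For the converse, if $B_{d^*}(\iota(x),\varepsilon)=\{\iota(x)\}$, then pulling the ball back through the isometry $\iota$ gives $B_d(x,\varepsilon)=\{x\}$ (using injectivity of $\iota$).

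For (c), the statement follows by assembling (a) and (b). If $X^*$ is perfect, then no $\iota(x)$ is isolated in $X^*$, so by (b) no $x\in X$ is isolated, i.e., $X$ is perfect. Conversely, if $X$ is perfect, then by (b) no $\iota(x)$ is isolated in $X^*$; and by (a) every isolated point of $X^*$ lies in $\iota[X]$. These together rule out any isolated point of $X^*$, so $X^*$ is perfect.

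The only nontrivial step is the forward direction of (b): one must not confuse ``$\iota(x)$ is isolated among the points of $\iota[X]$'' with ``$\iota(x)$ is isolated in $X^*$'', and the bridge is precisely the triangle-inequality/density argument above. Once (b) is in hand, parts (a) and (c) are short.
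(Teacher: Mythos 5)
Your proof is correct and follows essentially the same route as the paper: (a) from density, (b) by upgrading ``isolated within $\iota[X]$'' to ``isolated in $X^*$'' via density of $\iota[X]$ (your sequence/triangle-inequality argument just spells out the step the paper attributes to density), and (c) by combining (a) and (b). No gaps.
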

\begin{proof}
  \begin{enumerate}[(a)]
     \item Consequence of the density of $\iota[X]$.
     \item $x\in X$ is isolated iff there is some $\delta>0$ so that $\{x\}=B_X(x,\delta)$. On the other hand, for a fixed $\delta>0$, $\{x\}=B_X(x,\delta)$ iff $\{\iota(x)\}=B_{X^*}(\iota(x),\delta)\cap\iota[X]$ but, by density of $\iota[X]$, this is equivalent to $\{\iota(x)\}=B_{X^*}(\iota(x),\delta)$.
     \item Direct from (a) and (b).
  \end{enumerate}
\end{proof}

\begin{corollary}\label{PerfCode}
   $\la\omega,d\ra$ codes a perfect Polish space iff $\la\omega,d\ra$ is perfect. Even more, the set
   \[\Dwf^*(\omega):=\{d\in\Dwf(\omega):\la\omega,d\ra\textrm{ is perfect}\}\]
   is $\cosig^0_2$ in $(\R^\omega)^2$, so it is a Polish space.
\end{corollary}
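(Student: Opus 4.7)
The proof splits into the equivalence and the complexity computation, neither of which I expect to present a serious obstacle once the preparatory results are in place.

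For the equivalence, the plan is simply to apply Lemma \ref{ComplIsol}(c) to any completion $\la X,d_X,\iota\ra$ of $\la\omega,d\ra$: that lemma gives $X$ perfect iff $\la\omega,d\ra$ is perfect, and by Definition \ref{DefPolishcode}(2) the former is exactly what it means for $\la\omega,d\ra$ to code a perfect Polish space.

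For the complexity part, I would unpack ``perfect'' into a prenex arithmetic formula. A point $n<\omega$ fails to be isolated in $\la\omega,d\ra$ iff for every $k<\omega$ there exists $m<\omega$ with $m\neq n$ and $d(n,m)<2^{-k}$. Hence $\la\omega,d\ra$ is perfect iff
\[\forall n<\omega\,\forall k<\omega\,\exists m<\omega\,\bigl(m\neq n\;\wedge\;d(n,m)<2^{-k}\bigr).\]
For fixed $n,m,k$, the condition $d(n,m)<2^{-k}$ is the preimage of an open interval under the coordinate projection $d\mapsto d(n,m)$ from $\R^{\omega\times\omega}$ to $\R$, hence is open in $d$; taking the union over $m\neq n$ keeps it open, and then conjoining over all pairs $(n,k)$ produces a countable intersection of open sets, i.e.\ a $\cosig^0_2$ condition on $\R^{\omega\times\omega}$.

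Finally, I would combine this with Theorem \ref{codesamePolishAnalytic}(a), which gives $\Dwf(\omega)\in\cosig^0_1\subseteq\cosig^0_2$: intersecting the two conditions shows $\Dwf^*(\omega)\in\cosig^0_2$ in $\R^{\omega\times\omega}$. Since any $G_\delta$ subset of a Polish space is Polish, $\Dwf^*(\omega)$ is then itself Polish. The only minor care needed is to keep the ``$d\in\Dwf(\omega)$'' clause explicit, so that the formula is interpreted over arbitrary $d\in\R^{\omega\times\omega}$; once that is done, the definition is literally a $G_\delta$ one and no further work is required.
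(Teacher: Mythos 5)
Your proposal is correct and follows exactly the route the paper intends: the equivalence is an immediate application of Lemma \ref{ComplIsol}(c) together with Definition \ref{DefPolishcode}(2), and the complexity computation by writing non-isolation of each point as a countable intersection over $(n,k)$ of the open conditions $\exists m\neq n\,(d(n,m)<2^{-k})$, then intersecting with the closed set $\Dwf(\omega)$, is the standard unwinding the paper leaves implicit. The only cosmetic remark is that the ambient space should be $\R^{\omega\times\omega}$ as you use it (the statement's ``$(\R^\omega)^2$'' is evidently a typo), and your final appeal to the fact that a $G_\delta$ subset of a Polish space is Polish is exactly what is needed.
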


Recall that every perfect countable metric space is homeomorphic to $\Q$, so all the codes for Perfect Polish spaces are pairwise homeomorphic.

Cantor-Bendixson Theorem (see, e.g., \cite[Thm. 6.4]{Kechris}) states that any Polish space has a unique partition on a perfect set and a countable open set. Even more, this perfect set is the largest closed perfect subset, usually known as the \emph{perfect kernel} of the space. More generally, using Cantor-Bendixson derivates, any second countable space has a perfect kernel (that is, a largest perfect closed subset) and its complement is countable (see \cite[Sect. 6.C]{Kechris}). However, the perfect kernel of a countable metric space does not represent the perfect kernel of its completion. For example, in $\R^2$, consider $D:=\{(\frac{1}{n+1},q_n):n<\omega\}$ where $\Q\cap(0,1)=\{q_n:n<\omega\}$ and let $X$ be the closure of $D$ in $\R^2$. Note that $X=D\cup(\{0\}\times[0,1])$ and that $D$ is open in $X$ and discrete. Thus, the perfect kernel of $D$ is the empty set, but the perfect kernel of $X$ is $X\menos D$.


\section{Coding continuous functions}\label{SecFunc}

The concept of Cauchy-continuous function is essential to code functions between Polish metric spaces. We review how a Cauchy-continuous function between metric spaces can be extended to a continuous function between their completions and also how can this process be reversed. The corresponding facts allows us to find an appropriate coding and its properties.

The following is a very useful tool to prove the results in this section.

\begin{lemma}\label{Cauchycontdense}
   Let $\iota:\la X_0,d_0\ra\to\la X_1,d_1\ra$ be a dense isometry between metric spaces and let $f:\la X_1,d_1\ra\to\la X_2,d_2\ra$ be a function between metric spaces. Then, $f$ is Cauchy-continuous iff $f$ is continuous and $f\circ\iota$ is Cauchy-continuous.
\end{lemma}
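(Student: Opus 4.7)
The forward direction is routine: Cauchy-continuity always implies continuity (noted right after Definition \ref{DefCauchycont}), and since any isometry is trivially Cauchy-continuous and the composition of two Cauchy-continuous maps is Cauchy-continuous, $f\circ\iota$ inherits Cauchy-continuity from $f$. So I would dispatch this direction in a single sentence.

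The content is in the converse. Assume $f$ is continuous and $f\circ\iota$ is Cauchy-continuous, and let $\la y_n\ra_{n<\omega}$ be a Cauchy sequence in $X_1$. I want to show $\la f(y_n)\ra_{n<\omega}$ is Cauchy in $X_2$. Fix $\varepsilon>0$. The plan is to ``pull back'' each $y_n$ to a point $x_n\in X_0$ whose image under $\iota$ is so close to $y_n$ that continuity of $f$ at $y_n$ controls the error, and then to exploit Cauchy-continuity of $f\circ\iota$ on the sequence $\la x_n\ra$.

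Concretely: for each $n$, use continuity of $f$ at $y_n$ to pick $\delta_n>0$ with $d_1(y,y_n)<\delta_n\Rightarrow d_2(f(y),f(y_n))<\varepsilon/3$, then use density of $\iota[X_0]$ to pick $x_n\in X_0$ with $d_1(\iota(x_n),y_n)<\min\{\delta_n,2^{-n}\}$. The second bound guarantees $\la\iota(x_n)\ra$ is Cauchy in $X_1$ (by the triangle inequality with $\la y_n\ra$), hence $\la x_n\ra$ is Cauchy in $X_0$ because $\iota$ is an isometry. The first bound gives $d_2(f(\iota(x_n)),f(y_n))<\varepsilon/3$ for every $n$. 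Applying Cauchy-continuity of $f\circ\iota$ to $\la x_n\ra$ yields an $N$ past which $d_2(f(\iota(x_m)),f(\iota(x_n)))<\varepsilon/3$, and a three-term triangle inequality then bounds $d_2(f(y_m),f(y_n))$ by $\varepsilon$.

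I do not expect a serious obstacle; the only subtle point is that $\delta_n$ varies with $n$, so one might be tempted to pick the $x_n$'s first and then try to invoke continuity uniformly. The correct order is to choose $\delta_n$ first (from continuity at the given $y_n$) and only then select $x_n$ inside the $\delta_n$-neighbourhood, which makes the estimate $d_2(f(\iota(x_n)),f(y_n))<\varepsilon/3$ available pointwise in $n$ without any uniform-continuity hypothesis. This is the step where the argument genuinely uses both hypotheses simultaneously.
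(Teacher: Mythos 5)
Your proof is correct, but it takes a genuinely different route from the paper's. The paper does not argue with sequences at all: it fixes a completion $\la X^*,d^*,\iota^*\ra$ of $\la X_1,d_1\ra$, observes that $\la X^*,d^*,\iota^*\circ\iota\ra$ is then a completion of $\la X_0,d_0\ra$, applies Theorem \ref{ComplExt}(b) to extend the Cauchy-continuous map $f\circ\iota$ to a continuous $\hat{f}:X^*\to\hat{X}_2$, and then uses the density of $\iota[X_0]$ in $X_1$ together with the continuity of $f$ to conclude that $f$ and $\hat{f}\circ\iota^*$ agree on $X_1$; Cauchy-continuity of $f$ then falls out of Theorem \ref{ComplExt}(b) again. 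Your argument is instead a self-contained $\varepsilon/3$ computation: pull each term $y_n$ of a Cauchy sequence back to some $x_n\in X_0$ chosen \emph{after} the continuity radius $\delta_n$ at $y_n$, so that $d_2(f(\iota(x_n)),f(y_n))<\varepsilon/3$ pointwise, and let Cauchy-continuity of $f\circ\iota$ handle the middle term of the triangle inequality. The order-of-quantifiers remark you make is exactly the right subtlety, and the fact that your auxiliary sequence $\la x_n\ra$ depends on $\varepsilon$ is harmless, since for each $\varepsilon$ you only need the threshold $N$ it produces. What your approach buys is independence from the completion machinery (in particular you never need to embed $X_2$ into its completion, which the paper does ``wlog'') and a transparent view of where mere pointwise continuity of $f$ suffices; what the paper's approach buys is brevity once Theorem \ref{ComplExt} is in hand, and consistency with the paper's overall strategy of routing everything through completions.
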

\begin{proof}
   Note that, if $\la X^*,d^*,\iota^*\ra$ is a completion of $\la X_1,d_1\ra$, then $\la X^*,d^*,\iota^*\circ\iota\ra$ is a completion of $\la X_0,d_0\ra$. For the implication from right to left, by Theorem \ref{ComplExt}, there exists a unique continuous function $\hat{f}:X^*\to\hat{X}_2$ such that $f\circ\iota=\hat{f}\circ\iota^*\circ\iota$ (here, wlog, we assume that $X_2$ is a dense subspace of its completion $\hat{X}_2$). As both $\hat{f}\circ\iota^*$ and $f$ are continuous functions on $X_1$ which coincide in $\iota[X_0]$ and this set is dense in $X_1$, then $f=\hat{f}\circ\iota^*$. Therefore, by Theorem \ref{ComplExt}(b), $f$ is Cauchy-continuous.
\end{proof}

Note that $f\circ\iota$ Cauchy-continuous does not imply $f$ continuous. For example, $f:[0,1]\to[0,1]$, defined as $f(x)=0$ if $x\in[0,1)$ and $f(1)=1$, is not continuous but its restriction to some dense subspace is Cauchy continuous, for example, on $(0,1)\cap\Q$.

The following result, on how to build functions between complete metric spaces from continuous functions between dense subspaces, can be seen as a particular case of Theorem \ref{ComplExt}.

\begin{theorem}\label{CompFunc}
  Let $\la X,d_X\ra$, $\la Y,d_Y\ra$ be metric spaces, and let $\la X^*,d^*_X,\iota_X\ra$ and $\la Y^*,d^*_Y,\iota_Y\ra$ be their respective completions. Let $f:\la X,d_x\ra\to\la Y,d_Y\ra$ be a continuous function.
  \begin{enumerate}[(a)]
     \item There is at most one continuous function $\hat{f}:\la X^*,d^*_X\ra\to\la Y^*,d^*_Y\ra$ such that $\iota_Y\circ f=\hat{f}\circ\iota_X$.
     \item $\hat{f}$ as in (a) exists iff $f$ is Cauchy-continuous.
     \item If $f$ is Cauchy-continuous, then
         \begin{enumerate}[(c-1)]
            \item $\hat{f}$ is uniformly continuous iff $f$ is.
            \item $\hat{f}$ is an isometry iff $f$ is.
            \item $\hat{f}$ is an isometrical isomorphism iff $f$ is a dense isometry.
         \end{enumerate}
     \item Assume that $f$ is Cauchy continuous. Let $\la X',d_{X'},\iota'_X\ra$ and $\la Y',d_{Y'},\iota'_Y\ra$ be completions of $\la X,d_X\ra$ and $\la Y,d_Y\ra$, respectively, and let $\iota_{X^*}:\la X^*,d_{X^*}\ra\to\la X',d_{X'}\ra$ and $\iota_{Y^*}:\la Y^*,d_{Y^*}\ra\to\la Y',d_{Y'}\ra$ be the isometrical isomorphisms such that $\iota'_X=\iota_{X^*}\circ\iota_X$ and $\iota'_Y=\iota_{Y^*}\circ\iota_Y$. If $\hat{f}:X^*\to Y^*$ and $\hat{f}':X'\to Y'$ are the continuous functions such that $\iota_Y\circ f=\hat{f}\circ\iota_X$ and $\iota'_Y\circ f=\hat{f}'\circ\iota'_X$, then $\hat{f}'=\iota_{Y^*}\circ\hat{f}\circ\iota_{X^*}^{-1}$.
  \end{enumerate}
\end{theorem}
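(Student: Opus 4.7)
The plan is to observe that Theorem \ref{CompFunc} is essentially a consequence of Theorem \ref{ComplExt} applied to the composite $g:=\iota_Y\circ f:\la X,d_X\ra\to\la Y^*,d^*_Y\ra$, since $\la Y^*,d^*_Y\ra$ is complete. Under this substitution, a continuous $\hat{g}:X^*\to Y^*$ with $g=\hat{g}\circ\iota_X$ is exactly what part (a) asks for under the name $\hat{f}$. So parts (a)--(c) reduce to translating the four properties ``continuous extension exists'', ``uniformly continuous'', ``isometry'', and ``dense isometry'' between $f$ and $g$.

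First, for (a), uniqueness follows directly from Theorem \ref{ComplExt}(a), since $\iota_X[X]$ is dense in $X^*$ and any two continuous functions into the Hausdorff space $Y^*$ agreeing on a dense set are equal. For (b), I will use that $\iota_Y$ is an isometry, hence Cauchy-continuous: therefore $f$ Cauchy-continuous implies $g=\iota_Y\circ f$ Cauchy-continuous, and conversely, if $\la x_n\ra$ is Cauchy in $X$, then $\la g(x_n)\ra$ is Cauchy in $Y^*$, and since $d_Y(f(x_n),f(x_m))=d^*_Y(g(x_n),g(x_m))$, the sequence $\la f(x_n)\ra$ is Cauchy in $Y$; then Theorem \ref{ComplExt}(b) applied to $g$ produces $\hat{f}$.

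For (c), each of the three equivalences reduces in the same way. Uniform continuity of $g$ and $f$ are equivalent because $\iota_Y$ preserves distances, so Theorem \ref{ComplExt}(c-1) applied to $g$ yields (c-1). Similarly $g$ is an isometry iff $f$ is, giving (c-2) from Theorem \ref{ComplExt}(c-2). The most delicate case is (c-3): I want to show $g$ is a dense isometry into $Y^*$ iff $f$ is a dense isometry into $Y$, so that Theorem \ref{ComplExt}(c-3) for $g$ transfers to $f$. The direction from $f$ to $g$ uses transitivity of density ($f[X]$ dense in $Y$ and $\iota_Y[Y]$ dense in $Y^*$ imply $\iota_Y[f[X]]$ dense in $Y^*$). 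The converse uses that if $y\in Y\setminus\overline{f[X]}^{Y}$, one obtains $\varepsilon>0$ with $d^*_Y(\iota_Y(y),g(x))\geq\varepsilon$ for all $x\in X$, contradicting density of $g[X]$ in $Y^*$.

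Finally, for (d), the strategy is to construct the function $h:=\iota_{Y^*}\circ\hat{f}\circ\iota_{X^*}^{-1}:X'\to Y'$ and verify it satisfies the defining equation for $\hat{f}'$, then invoke uniqueness from (a). Explicitly,
\[
h\circ\iota'_X = \iota_{Y^*}\circ\hat{f}\circ\iota_{X^*}^{-1}\circ\iota_{X^*}\circ\iota_X = \iota_{Y^*}\circ\hat{f}\circ\iota_X = \iota_{Y^*}\circ\iota_Y\circ f = \iota'_Y\circ f,
\]
so by uniqueness (applied to the completion $\la X',d_{X'},\iota'_X\ra$, which by Theorem \ref{ComplExt}(d) is also a completion and so we can apply the newly proved part (a)), $\hat{f}'=h$. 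The main obstacle is really just the bookkeeping in (c-3), specifically checking that density of the composite $\iota_Y\circ f$ in $Y^*$ forces density of $f[X]$ in $Y$; beyond that, the argument is a routine translation along $\iota_Y$.
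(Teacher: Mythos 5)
Your proposal is correct and follows essentially the same route as the paper: both reduce the theorem to Theorem \ref{ComplExt} applied to the composite $\iota_Y\circ f$ into the complete space $\la Y^*,d^*_Y\ra$, and both prove (d) by the same computation $\hat{f}'\circ\iota_{X^*}\circ\iota_X=\iota'_Y\circ f=\iota_{Y^*}\circ\hat{f}\circ\iota_X$ followed by density/uniqueness. The only cosmetic difference is that where the paper cites Lemma \ref{Cauchycontdense} to transfer Cauchy-continuity, you verify directly (and correctly) that $\iota_Y\circ f$ is Cauchy-continuous, uniformly continuous, an isometry, or a dense isometry into $Y^*$ exactly when $f$ has the corresponding property into $Y$.
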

\begin{proof}
   For (b) we use Theorem \ref{ComplExt}(b) and Lemma \ref{Cauchycontdense}. To see (d), note that $\hat{f}'\circ\iota_{X^*}\circ\iota_X=\hat{f}'\circ\iota'_X=\iota'_Y\circ f=\iota_{Y^*}\circ\iota_Y\circ f=\iota_{Y^*}\circ\hat{f}\circ\iota_X$, that is, $(\hat{f}'\circ\iota_{X^*})\circ\iota_X=(\iota_{Y^*}\circ\hat{f})\circ\iota_X$. Thus, as both $\hat{f}'\circ\iota_{X^*}$ and $\iota_{Y^*}\circ\hat{f}$ coincide on $\iota_X[X]$ and this set is dense in $X^*$, then we conclude that $\hat{f}'\circ\iota_{X^*}=\iota_{Y^*}\circ\hat{f}$.
\end{proof}

Theorem \ref{CompFunc}(d) indicates that any Cauchy-continuous function between metric spaces has a unique continuous extension (modulo isometrical isomorphisms) between their corresponding completions. The following result is a reciprocal of this.

\begin{lemma}\label{recCompFunc}
   Let $\la X,d_X\ra$ and $\la Y,d_Y\ra$ be metric spaces, let $\la X^*,d^*_X,\iota_X\ra$ and $\la Y^*,d^*_Y,\iota_Y\ra$ be their respective completions, and let $\hat{f}:X^*\to Y^*$ be a continuous function.
   \begin{enumerate}[(a)]
      \item There is a continuous function $f:\la X,d\ra\to \la Y,d\ra$ such that $\iota_Y\circ f=\hat{f}\circ\iota_X$ iff $\ran(\hat{f}\circ\iota_X)\subseteq\ran\iota_Y$. Moreover, such $f$ is unique, and it is Cauchy-continuous.
      \item There exists a cardinal $\eta\leq|X|+|Y|$, a metric $d'$ on $\eta$ and dense isometries $\iota:\la Y,d_Y\ra\to\la\eta,d'\ra$ and $\iota':\la\eta,d'\ra\to\la Y^*,d^*_Y\ra$ so that $\ran(\hat{f}\circ\iota_X)\subseteq\ran\iota'$.
   \end{enumerate}
\end{lemma}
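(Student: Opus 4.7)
The plan splits into the two parts of the statement. For (a), the forward implication is immediate: if such an $f$ exists, $\ran(\hat{f}\circ\iota_X)=\ran(\iota_Y\circ f)\subseteq\ran\iota_Y$. For the converse, my plan is to exploit that $\iota_Y$ is an injective isometry, so $\iota_Y^{-1}:\iota_Y[Y]\to Y$ is a well-defined isometry; the hypothesis $\ran(\hat{f}\circ\iota_X)\subseteq\ran\iota_Y$ then makes $f:=\iota_Y^{-1}\circ\hat{f}\circ\iota_X$ a legal composition. By construction $\iota_Y\circ f=\hat{f}\circ\iota_X$, and any other $f'$ with this property agrees with $f$ by injectivity of $\iota_Y$, giving uniqueness. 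Continuity of $f$ is clear, and to verify Cauchy-continuity I will chase a Cauchy sequence $\la x_n\ra_{n<\omega}$ in $X$ through the diagram: $\la\iota_X(x_n)\ra_{n<\omega}$ is Cauchy in the complete space $X^*$ hence converges, continuity of $\hat{f}$ promotes this to a convergent (thus Cauchy) sequence in $Y^*$, and because $\iota_Y$ is an isometry this pulls back to a Cauchy sequence $\la f(x_n)\ra_{n<\omega}$ in $Y$. (Alternatively, one can apply Lemma \ref{Cauchycontdense} to $\hat{f}$ and then descend.)

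For (b), my idea is to realise the intermediate space as a relabelled copy of $Z:=\iota_Y[Y]\cup\ran(\hat{f}\circ\iota_X)$, viewed as a subspace of $\la Y^*,d^*_Y\ra$. Since $|\iota_Y[Y]|=|Y|$ and $|\ran(\hat{f}\circ\iota_X)|\leq|X|$, we have $|Z|\leq|Y|+|X|$; set $\eta:=|Z|$, fix a bijection $g:Z\to\eta$, and let $d'$ be the pushforward metric that makes $g$ an isometrical isomorphism between $\la Z,d^*_Y\frestr(Z\times Z)\ra$ and $\la\eta,d'\ra$. Then $\iota:=g\circ\iota_Y:Y\to\eta$ and $\iota':=g^{-1}:\eta\to Y^*$ are isometries by construction, and $\ran\iota'=Z\supseteq\ran(\hat{f}\circ\iota_X)$, as required.

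The density checks reduce to the elementary observation that if $A\subseteq B\subseteq C$ and $A$ is dense in $C$, then $A$ is dense in $B$ and $B$ is dense in $C$. Applying this to the chain $\iota_Y[Y]\subseteq Z\subseteq Y^*$, with $\iota_Y[Y]$ dense in $Y^*$ because $\iota_Y$ is a dense isometry, yields simultaneously that $Z$ is dense in $Y^*$ (so $\iota'$ is a dense isometry) and that $\iota_Y[Y]$ is dense in $Z$; carrying the latter across by the isometrical relabelling $g$ gives that $\iota[Y]=g[\iota_Y[Y]]$ is dense in $\eta$. I do not anticipate a serious obstacle: (a) is essentially bookkeeping once one notices that the range hypothesis lets us invert $\iota_Y$ on the relevant set, and (b) is the standard device of taking the union of the two relevant subsets of the completion and relabelling it by an ordinal of the right cardinality.
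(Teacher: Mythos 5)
Your proposal is correct and follows essentially the same route as the paper: in (a) you define $f:=\iota_Y^{-1}\circ\hat{f}\circ\iota_X$ exactly as the paper does, and in (b) you take the union $\iota_Y[Y]\cup\ran(\hat{f}\circ\iota_X)$ inside $Y^*$ and relabel it by a cardinal with the pushforward metric, which is the paper's construction. The only (harmless) divergence is that the paper deduces Cauchy-continuity of $f$ by citing Theorem \ref{CompFunc}(b), whereas you verify it by a direct sequence chase; your more explicit density checks in (b) are also fine.
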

\begin{proof}
   \begin{enumerate}[(a)]
      \item $\ran(\hat{f}\circ\iota_X)\subseteq\ran\iota_Y$ implies that $f:=\iota_Y^{-1}\circ\hat{f}\circ\iota_X:X\to Y$ is well defined and that $\iota_Y\circ f=\hat{f}\circ\iota_X$. Thus, by Theorem \ref{CompFunc}(b), $f$ is Cauchy-continuous. Uniqueness is straightforward, as well as the reciprocal.
      \item Put $Y'=\ran(f\circ\iota_X)\cup\ran\iota_Y$ and $\eta=|Y'|$. Choose $\iota':\eta\to Y'$ some bijection and let $d'$ be the metric on $\eta$ so that $\iota'$ becomes an isometry onto $\la Y',d^*_Y\frestr(Y'\times Y')\ra$. Note that $\iota=(\iota')^{-1}\circ\iota_Y$ works.
   \end{enumerate}
\end{proof}

The previous results guarantee that we can code continuous functions between Polish metric spaces by Cauchy-continuous functions between countable metric spaces.

\begin{definition}\label{Defcodecont}
   Define
   \[\Cwf(\omega):=\{(g,d,d'):d,d'\in\Dwf(\omega)\textrm{\ and }g:\la \omega,d\ra\to\la\omega,d'\ra\textrm{\ is Cauchy-continuous}\}.\]
   If $f:\la X,d_X\ra\to\la Y,d_Y\ra$ is a continuous function between infinite Polish metric spaces and $(g,d,d')\in\Cwf(\omega)$, say that \emph{$(g,d,d')$ codes $f$} if there are dense isometries $\iota:\la\omega,d\ra\to\la X,d_X\ra$ and $\iota':\la\omega,d'\ra\to\la Y,d_Y\ra$ such that $\iota'\circ g=f\circ\iota$.

   Define the relations $\leqcdi$ and $\eqcdi$ on $\Cwf(\omega)$ as follows (`cdi' stands for `commuting dense isometries'). $(g_0,d_0,d'_0)\leqcdi(g_1,d_1,d'_1)$ iff there are dense isometries $\iota:\la\omega,d_0\ra\to\la\omega,d_1\ra$ and $\iota':\la\omega,d'_0\ra\to\la\omega,d'_1\ra$ so that $\iota'\circ g_0=g_1\circ\iota$; $(g_0,d_0,d'_0)\eqcdi(g_1,d_1,d'_1)$ iff there is a $(g,d,d')\in\Cwf(\omega)$ so that $(g_e,d_e,d'_e)\leqcdi(g,d,d')$ for each $e=0,1$.
\end{definition}


According to the following result, the relation $\eqcdi$ determines whether two codes in $\Cwf(\omega)$ extend to the same continuous function.

\begin{lemma}\label{codesamecontfunct}
   For $e=0,1$ let $g_e:\la\omega,d_e\ra\to\la\omega,d'_e\ra$ be a Cauchy-continuous function between metric spaces. Then, the following statements are equivalent.
   \begin{enumerate}[(1)]
      \item Both $(g_0,d_0,d'_0)$ and $(g_1,d_1,d'_1)$ code the same continuous function, that is, there is a continuous function $f:\la X,d_X\ra\to\la Y,d_Y\ra$ between Polish metric spaces coded by both $(g_0,d_0,d'_0)$ and $(g_1,d_1,d'_1)$.
      \item $(g_0,d_0,d'_0)\eqcdi(g_1,d_1,d'_1)$.
   \end{enumerate}
\end{lemma}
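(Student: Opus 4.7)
The plan is to prove the two directions separately. Direction $(2) \Rightarrow (1)$ is essentially an application of Theorem \ref{CompFunc}. Starting from a common witness $(g,d,d') \in \Cwf(\omega)$ together with dense isometries $\iota_e : \la\omega, d_e\ra \to \la\omega, d\ra$ and $\iota'_e : \la\omega, d'_e\ra \to \la\omega, d'\ra$ satisfying $\iota'_e \circ g_e = g \circ \iota_e$, I take completions $\la X, d_X, \hat\iota\ra$ of $\la\omega, d\ra$ and $\la Y, d_Y, \hat\iota'\ra$ of $\la\omega, d'\ra$ and extend $g$ to a continuous $f : X \to Y$ via Theorem \ref{CompFunc}(b). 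Since a composition of dense maps into a dense image is dense, the compositions $\hat\iota \circ \iota_e$ and $\hat\iota' \circ \iota'_e$ are dense isometries from the respective $\omega$-codes into $X$ and $Y$; a routine diagram chase using $\iota'_e \circ g_e = g \circ \iota_e$ and $\hat\iota' \circ g = f \circ \hat\iota$ then shows that both $(g_e, d_e, d'_e)$ code $f$.

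For the converse $(1) \Rightarrow (2)$, suppose both codes represent the same $f : \la X, d_X\ra \to \la Y, d_Y\ra$ via dense isometries $\iota_e$ and $\iota'_e$ satisfying $\iota'_e \circ g_e = f \circ \iota_e$. I imitate the amalgamation from Lemma \ref{sameCompl}, enriched to incorporate $f$. Let $A := \iota_0[\omega] \cup \iota_1[\omega] \subseteq X$ and $B := \iota'_0[\omega] \cup \iota'_1[\omega] \cup f[A] \subseteq Y$; since $X$ and $Y$ are infinite Polish spaces and each $\iota_e, \iota'_e$ is injective, both $A$ and $B$ are countably infinite. Fix bijections $\phi : \omega \to A$ and $\psi : \omega \to B$, define $d, d' \in \Dwf(\omega)$ as the pullback metrics making $\phi$ and $\psi$ isometries, and set $g := \psi^{-1} \circ f \circ \phi$; this makes sense precisely because $f[A] \subseteq B$. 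The maps $j_e := \phi^{-1} \circ \iota_e$ and $j'_e := \psi^{-1} \circ \iota'_e$ are dense isometries into $\la\omega, d\ra$ and $\la\omega, d'\ra$, because $\iota_e[\omega]$ is dense in $X$ and hence in $A$, and likewise for $\iota'_e[\omega]$ in $B$. Direct substitution yields $g \circ j_e = j'_e \circ g_e$, so $(g_e, d_e, d'_e) \leqcdi (g, d, d')$, while the Cauchy-continuity of $g$ descends from the continuity of $f$ on the complete space $X$ (which forces $f$ to be Cauchy-continuous).

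The main point of care is keeping the amalgamated code inside $\Cwf(\omega)$. This forces me to include $f[A]$ in the set $B$ so that $g$ is actually a self-map of $\omega$; a direct amalgamation of only the $\iota'_e$-images, as in Lemma \ref{sameCompl}, would generally not contain $f[A]$ and so would fail to define $g$ as a function between the chosen $\omega$-codes. The second subtlety is that the amalgams have cardinality exactly $\omega$, rather than some smaller ordinal, which relies on the standing assumption that the coded Polish spaces are infinite. Beyond these setup choices, both directions reduce to diagram chasing through $\phi, \psi, \hat\iota, \hat\iota'$ translating between $\omega$-indexed codes and their images inside completions.
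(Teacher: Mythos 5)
Your proof is correct and follows essentially the same route as the paper: extend via Theorem \ref{CompFunc} for $(2)\Rightarrow(1)$, and for $(1)\Rightarrow(2)$ amalgamate the ranges of the dense isometries inside the completions, pull back the metrics to $\omega$, and induce the Cauchy-continuous $g$ by conjugating $f$. The only (harmless) deviation is that you adjoin $f[A]$ to $B$ and claim this is necessary, whereas the commuting condition $\iota'_e\circ g_e=f\circ\iota_e$ already gives $f[A]=\ran(\iota'_0\circ g_0)\cup\ran(\iota'_1\circ g_1)\subseteq\iota'_0[\omega]\cup\iota'_1[\omega]$, which is exactly the observation the paper uses to avoid enlarging the amalgam.
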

\begin{proof}
   (2) implies (1) follows directly from Theorem \ref{CompFunc}. Assume (1), that is, for each $e=0,1$ there are dense isometries $\iota_e:\la\omega,d_e\ra\to\la X,d_X\ra$ and $\iota'_e:\la\omega,d'_e\ra\to\la Y,d_Y\ra$ so that $f\circ\iota_e=\iota'_e\circ g_e$. Put $Z:=\ran\iota_0\cup\ran\iota_1$, $Z':=\ran\iota'_0\cup\iota'_1$, choose bijections $\iota:\omega\to Z$, $\iota':\omega\to Z'$ and find $d,d'\in\Dwf(\omega)$ so that $d$ makes $\iota$ an isometry onto $\la Z,d_X\frestr(Z\times Z)\ra$ and $d'$ makes $\iota'$ an isometry onto $\la Z',d_Y\frestr(Z'\times Z')\ra$. For each $e=0,1$, put $\hat{\iota}_e:=\iota^{-1}\circ\iota_e:\la\omega,d_e\ra\to\la\omega,d\ra$ and $\hat{\iota}'_e:=\iota^{-1}\circ\iota_e:\la\omega,d'_e\ra\to\la\omega,d'\ra$ which are dense isometries. Also, $\iota\circ\hat{\iota}_e=\iota_e$ and $\iota'\circ\hat{\iota}'_e=\iota'_e$. On the other hand, $\ran(f\circ\iota)=\ran(f\circ\iota_0)\cup\ran(f\circ\iota_1)=\ran(\iota'_0\circ g_0)\cup\ran(\iota'_1\circ g_1)\subseteq\ran\iota'$ so, by Lemma \ref{recCompFunc}(b), there is a Cauchy-continuous $g:\la\omega,d\ra\to\la\omega,d'\ra$ so that $\iota'\circ g=f\circ\iota$. Then, we can infer that $\iota'\circ g\circ\hat{\iota}_e=\iota'\circ\hat{\iota}'_e\circ g_e$ for each $e=0,1$, so $g\circ\hat{\iota}_e=\hat{\iota}'_e\circ g_e$. Therefore, $(g_e,d_e,d'_e)\leqcdi(g,d,d')$.
\end{proof}

We also provide the complexity of $\eqcdi$ and of other related statements.

\begin{theorem}\label{codesamecontfunctAnalytic}
  \begin{enumerate}[(a)]
     \item The statement ``$z$ is a Cauchy sequence in the metric space $\la\omega,d\ra$" is $\cosig^0_3$ in $\omega^\omega\times\R^{\omega\times\omega}$.
     \item The statement ``$g:\la\omega,d\ra\to\la\omega,d'\ra$ is continuous between metric spaces" is $\cosig^0_3$ in $\omega^\omega\times(\R^{\omega\times\omega})^2$.
     \item $\Cwf(\omega)$ is $\cosig^1_1$ in $\omega^\omega\times(\R^{\omega\times\omega})^2$.
     \item The relation $\leqcdi$ in $\Cwf(\omega)$ is a conjunction of a $\sig^1_1$ with a $\cosig^1_1$ relation in $(\omega^\omega)^2\times(\R^{\omega\times\omega})^4$.
     \item The relation $\eqcdi$ in $\Cwf(\omega)$ is a conjunction of a $\sig^1_1$ with a $\cosig^1_1$ statement in $(\omega^\omega)^2\times(\R^{\omega\times\omega})^4$.
  \end{enumerate}
\end{theorem}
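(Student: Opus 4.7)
The plan is to treat parts (a)--(e) in order, as each successive part leans on the complexity bounds established in the earlier ones. For (a), I would express ``$z$ is Cauchy in $\la\omega, d\ra$'' as
\[\forall k\ \exists N\ \forall n, m \geq N\ \bigl(d(z(n), z(m)) \leq 2^{-k}\bigr),\]
whose innermost predicate is closed in $(z, d) \in \omega^\omega \times \R^{\omega\times\omega}$: the map $(z, d) \mapsto d(z(n), z(m))$ is continuous because the pair $(z(n), z(m))$ depends continuously on $z$ and ranges in the discrete space $\omega \times \omega$, on which $d$ is evaluated continuously. The quantifier pattern $\forall\exists\forall$ over a closed predicate yields $\cosig^0_3$. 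For (b), the $\varepsilon$-$\delta$ formulation
\[\forall n\ \forall k\ \exists l\ \forall m\ \bigl(d(n, m) < 2^{-l} \Rightarrow d'(g(n), g(m)) \leq 2^{-k}\bigr)\]
has the same structure, with an open-antecedent--closed-consequent implication (hence closed), again giving $\cosig^0_3$.

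For (c), I would rewrite $(g, d, d') \in \Cwf(\omega)$ as $d, d' \in \Dwf(\omega)$ plus
\[\forall z \in \omega^\omega\ \bigl(z \text{ Cauchy in } d \Rightarrow g \circ z \text{ Cauchy in } d'\bigr).\]
Both Cauchy-predicates are arithmetic in the parameters $(z, g, d, d')$ by (a) (with the second obtained by precomposing with the continuous map $(z, g) \mapsto g \circ z$), so the universal over the Polish space $\omega^\omega$ makes the whole statement $\cosig^1_1$; adjoining the $\cosig^0_1$ bound on $\Dwf(\omega)$ from Theorem \ref{codesamePolishAnalytic}(a) preserves this.

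For (d), I would decompose $\leqcdi$ into the conjunction of (i) $(g_0, d_0, d'_0), (g_1, d_1, d'_1) \in \Cwf(\omega)$, which is $\cosig^1_1$ by (c), and (ii) the existence of $\iota, \iota' \in \omega^\omega$ that are dense isometries making $\iota' \circ g_0 = g_1 \circ \iota$. By Theorem \ref{codesamePolishAnalytic}(b)--(d), being a dense isometry is the conjunction of a $\cosig^0_1$ (isometry) and a $\sig^0_2$ (density of the continuous image $\iota[\omega]$) condition, and the commutation equation is $\cosig^0_1$; hence (ii) is an existential over $(\omega^\omega)^2$ of a $\sig^0_2$ body, which is $\sig^1_1$.

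The hard part will be (e). Read naively, $\eqcdi$ is an existential over $(g, d, d') \in \Cwf(\omega)$ of (d)-type conditions against the two given codes, which is $\exists\,\bigl(\cosig^1_1 \wedge \sig^1_1\bigr)$ and only collapses to $\sig^1_2$ in general, not to what is claimed. The plan is to use Lemma \ref{Cauchycontdense} to eliminate the $\cosig^1_1$ Cauchy-continuity requirement on the witness $g$: whenever dense isometries $\iota_0, \iota'_0$ satisfy $\iota'_0 \circ g_0 = g \circ \iota_0$ and $g_0$ is Cauchy-continuous (forced by $(g_0, d_0, d'_0) \in \Cwf(\omega)$), the composition $g \circ \iota_0 = \iota'_0 \circ g_0$ is Cauchy-continuous, and Lemma \ref{Cauchycontdense} applied to the dense isometry $\iota_0$ then says that mere continuity of $g$ suffices for Cauchy-continuity of $g$. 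So inside the inner existential one may replace ``Cauchy-continuous'' by ``continuous'', which by (b) is $\cosig^0_3$ and thus arithmetic. The inner existential becomes $\sig^1_1$, and conjoining it with the outer $\cosig^1_1$ requirement $(g_0, d_0, d'_0), (g_1, d_1, d'_1) \in \Cwf(\omega)$ produces the announced form.
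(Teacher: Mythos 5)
Your proposal is correct and follows essentially the same route as the paper: the only delicate point is (e), and your key move---using Lemma \ref{Cauchycontdense} to replace the $\cosig^1_1$ Cauchy-continuity requirement on the witness $g$ by mere continuity (which is arithmetic by (b)), so that the inner existential stays $\sig^1_1$---is exactly the paper's argument. Your counting for (a)--(d) also matches the stated bounds, and is in fact more detailed than the paper, which only spells out (e).
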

\begin{proof}
   We only focus on (e). Note that, for $(g_0,d_0,d'_0),(g_1,d_1,d'_1)\in\Cwf(\omega)$, $(g_0,d_0,d'_0)\eqcdi(g_1,d_1,d'_1)$ iff `there are $d,d'\in\Cwf(\omega)$, dense isometries $\iota_e:\la\omega,d_e\ra\to\la\omega,d\ra$ and $\iota'_e:\la\omega,d'_e\ra\to\la\omega,d'\ra$ for each $e=0,1$, and there is a continuous function $g:\la\omega,d\ra\to\la\omega,d'\ra$ such that $\iota'_e\circ g_e=g\circ\iota_e$ for each $e=0,1$' because such $g$ must be Cauchy-continuous by Lemma \ref{Cauchycontdense}. This latter statement is analytic by (a)-(c) and Theorem \ref{codesamePolishAnalytic}. Therefore, the relation $\eqcdi$ is a conjunction of the previous analytic statement with the co-analytic statement `$(g_0,d_0,d'_0)\in\Cwf(\omega)$ and $(g_1,d_1,d'_1)\in\Cwf(\omega)$'.
\end{proof}

Finally, thanks to the results of this section, we can characterize when two countable metric spaces code the same Polish space (that is, homeomorphic Polish spaces) and we also find the complexity of this equivalence relation.

\begin{theorem}\label{codesamePolish}
   Let $d_0,d_1\in\Dwf(\omega)$. Then, $\la\omega,d_0\ra$ and $\la\omega,d_1\ra$ have homeomorphic completions iff there are $d'_0,d'_1\in\Dwf(\omega)$ such that $d_e\leqdi d'_e$ for each $e=0,1$ and there is a Cauchy-continuous bijection $g:\la\omega,d'_0\ra\to\la\omega,d'_1\ra$ with Cauchy-continuous inverse.
\end{theorem}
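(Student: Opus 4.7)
The argument splits into two directions, and I would handle the reverse (the `if' direction) first. Given $d'_0, d'_1$ with $d_e \leqdi d'_e$ for $e=0,1$ and a Cauchy-continuous bijection $g:\la\omega,d'_0\ra\to\la\omega,d'_1\ra$ with Cauchy-continuous inverse, Corollary \ref{samePolishcode} implies that $\la\omega,d_e\ra$ and $\la\omega,d'_e\ra$ have isometrically isomorphic completions, so it suffices to show that the completions of $\la\omega,d'_0\ra$ and $\la\omega,d'_1\ra$ are homeomorphic. I would apply Theorem \ref{CompFunc}(b) to lift $g$ to a continuous $\hat{g}$ between the completions and $g^{-1}$ to a continuous $\widehat{g^{-1}}$, then observe that both $\widehat{g^{-1}}\circ\hat{g}$ and the identity extend $\id_\omega$ (i.e.\ they agree on the dense image of $\omega$ in the completion); by the uniqueness clause of Theorem \ref{CompFunc}(a) they coincide, and symmetrically for the other composition. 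Hence $\hat{g}$ is a homeomorphism.

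For the forward direction, fix completions $\la X_e^*,d_e^*,\iota_e\ra$ of $\la\omega,d_e\ra$ and a homeomorphism $h:X_0^*\to X_1^*$. The key idea is to replace the dense sets $\iota_e[\omega]$ by larger countable dense sets that are exchanged bijectively by $h$. I set
\[Z_0:=\iota_0[\omega]\cup h^{-1}[\iota_1[\omega]], \qquad Z_1:=h[Z_0]=h[\iota_0[\omega]]\cup\iota_1[\omega].\]
Each $Z_e$ is countably infinite and dense in $X_e^*$, since $\iota_e[\omega]\subseteq Z_e$ and $\iota_e[\omega]$ is already dense in $X_e^*$. I would pick bijections $\phi_e:\omega\to Z_e$ and define $d'_e$ on $\omega$ as the pullback metric that makes $\phi_e$ an isometry onto $\la Z_e, d_e^*\frestr(Z_e\times Z_e)\ra$. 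Because $\iota_e[\omega]\subseteq Z_e$ and $\iota_e[\omega]$ is dense in $Z_e$ (subsets of metric spaces inherit density from the ambient space), the map $\phi_e^{-1}\circ\iota_e:\la\omega,d_e\ra\to\la\omega,d'_e\ra$ is a dense isometry, witnessing $d_e\leqdi d'_e$.

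I then define $g:=\phi_1^{-1}\circ h\circ\phi_0:\omega\to\omega$; this is a bijection with inverse $\phi_0^{-1}\circ h^{-1}\circ\phi_1$, since $h$ maps $Z_0$ bijectively to $Z_1$. Since $h$ is continuous on the complete space $X_0^*$, it is automatically Cauchy-continuous, and so is its restriction to $Z_0$: any Cauchy sequence in $Z_0$ is Cauchy in $X_0^*$, its $h$-image is Cauchy in $X_1^*$, and hence Cauchy in $Z_1$. Composing with the isometries $\phi_0$ and $\phi_1^{-1}$ preserves Cauchy-continuity, so $g$ is Cauchy-continuous; the symmetric argument for $h^{-1}$ handles $g^{-1}$.

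The main obstacle I foresee is the coordinated choice of the auxiliary sets $Z_0$ and $Z_1$: they must simultaneously contain $\iota_e[\omega]$ (so that $d_e\leqdi d'_e$ holds) and satisfy $h[Z_0]=Z_1$ (so that $g$ becomes a bijection). The definition of $Z_0$ as the \emph{closure} of $\iota_0[\omega]$ under $h^{-1}\circ\iota_1$ — with $Z_1$ then forced by applying $h$ — is essentially the minimal choice meeting both constraints; once it is in place, the remaining verifications (density, bijectivity of the restriction of $h$, Cauchy-continuity of restrictions) reduce to routine applications of earlier results.
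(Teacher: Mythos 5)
Your proposal is correct and follows essentially the same route as the paper: the same lifting of $g$ and $g^{-1}$ to the completions in the ``if'' direction, and the same auxiliary dense sets ($Z_0,Z_1$ are exactly the paper's $D_0,D_1$) in the ``only if'' direction. The only (harmless) variation is that you verify bijectivity and Cauchy-continuity of $g$ by direct computation with $h$ and the isometries $\phi_e$, whereas the paper routes these through Lemma \ref{recCompFunc}(a) and the uniqueness clause of that lemma.
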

\begin{proof}
   Assume that $d'_0,d'_1\in\Dwf(\omega)$ satisfy $d_e\leqdi d'_e$ for each $e=0,1$ and that there is a Cauchy-continuous bijection $g:\la\omega,d'_0\ra\to\la\omega,d'_1\ra$ with Cauchy-continuous inverse. Choose a completion $\la X^*_e,d^*_e,\iota^*_e\ra$ of $\la\omega,d'_e\ra$ (which also yields a completion of $\la\omega,d_e\ra$) for each $e=0,1$. By Theorem \ref{CompFunc} applied to $g$ and to $g^{-1}$, there are continuous functions $f^*_0:X_0^*\to X_1^*$ and $f^*_1:X_1^*\to X_0^*$ such that $\iota^*_1\circ g=f^*_0\circ\iota^*_0$ and $\iota^*_0\circ g^{-1}=f^*_1\circ\iota^*_1$. Thus, \[\iota^*_0\circ\id_\omega=\iota^*_0\circ g^{-1}\circ g=f^*_1\circ\iota^*_1\circ g=(f^*_1\circ f^*_0)\circ\iota^*_0,\]
   so, by Theorem \ref{CompFunc} (uniqueness), $f^*_1\circ f^*_0=\id_{X^*_0}$. Conversely, $f^*_0\circ f^*_1=\id_{X^*_1}$, so $X^*_0$ and $X^*_1$ are homeomorphic.

   To see the converse, let $\la X'_e,d'_e,\iota'_e\ra$ be a completion of $\la\omega,d_e\ra$ for each $e=0,1$ and assume that there is an homeomorphism $f:X'_0\to X'_1$. Put $D_0=\ran\iota'_0\cup\ran(f^{-1}\circ\iota'_1)$ and $D_1=f[D_0]=\ran(f\circ\iota'_0)\cup\ran\iota'_1$. For each $e=0,1$, as in the proof of Lemma \ref{recCompFunc}(b), find a $d'_e\in\Dwf(\omega)$ such that there is an isometrical isomorphism $\iota^*_e:\la\omega,d'_e\ra\to D_e$. Define $\iota_e=(\iota^*_e)^-1\circ\iota'_e$, which is clearly an dense isometry from $\la\omega,d_e\ra$ to $\la\omega,d'_e\ra$, so $d_e\leqdi d'_e$. By Lemma \ref{recCompFunc}(a) applied to $f$ and $f^{-1}$, there are Cauchy-continuous functions $g:\la\omega,d'_0\ra\to\la\omega,d'_1\ra$ and $g':\la\omega,d'_1\ra\to\la\omega,d'_0\ra$ such that $f\circ\iota^*_0=\iota^*_1\circ g$ and $f^{-1}\circ\iota^*_1=\iota^*_0\circ g'$. As
   \[\id_{X'_0}\circ\iota^*_0=f^{-1}\circ f\circ \iota^*_0=f^{-1}\circ\iota^*_1\circ g=\iota^*_0\circ(g'\circ g),\]
   by uniqueness in Lemma \ref{recCompFunc}(a), $g'\circ g=\id_\omega$. Likewise, we obtain $g\circ g'=\id_\omega$, so $g$ is bijective and $g^{-1}=g'$ is Cauchy-continuous.
\end{proof}

We denote the relation in the previous theorem by $d_0\eqPc d_1$ (`P' stands for `Polish'), which means that $\la\omega,d_0\ra$ and $\la\omega,d_1\ra$ code homeomorphic Polish spaces. As in Theorem \ref{codesamecontfunctAnalytic}, it is easy to see that being a Cauchy-continuous bijection with a Cauchy-continuous inverse is a co-analytic statement. Therefore,

\begin{corollary}\label{codesamePolishSigma1-2}
  The relation $\eqPc$ is $\sig^1_2$ in $(\R^{\omega\times\omega})^2$.
\end{corollary}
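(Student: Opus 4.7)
The plan is to feed the characterization given in Theorem \ref{codesamePolish} into the complexity machinery already established in Theorems \ref{codesamePolishAnalytic} and \ref{codesamecontfunctAnalytic}, and then carefully count quantifiers.

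First, I would unfold Theorem \ref{codesamePolish}: $d_0\eqPc d_1$ iff there exist $d'_0,d'_1\in\R^{\omega\times\omega}$ and $g\in\omega^\omega$ such that the following conjunction holds:
\begin{enumerate}[(i)]
   \item $d'_0,d'_1\in\Dwf(\omega)$;
   \item $d_0\leqdi d'_0$ and $d_1\leqdi d'_1$;
   \item $g:\omega\to\omega$ is a bijection;
   \item $g:\la\omega,d'_0\ra\to\la\omega,d'_1\ra$ is Cauchy-continuous;
   \item the inverse function $g^{-1}:\la\omega,d'_1\ra\to\la\omega,d'_0\ra$ is Cauchy-continuous.
\end{enumerate}
Since $\R^{\omega\times\omega}$ and $\omega^\omega$ are Polish, the leading quantifier $\exists(d'_0,d'_1,g)$ is a single existential over reals, so it will contribute one alternation to the matrix.

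Next, I would bound the complexity of the matrix piece by piece. Clause (i) is $\cosig^0_1$ by Theorem \ref{codesamePolishAnalytic}(a), and (iii) is arithmetic (the graph of a bijection is a $\cosig^0_2$ condition on $g\in\omega^\omega$). Clause (ii) is $\sig^1_1$ by Theorem \ref{codesamePolishAnalytic}(e). For (iv), unfolding the definition gives
\[\forall z\in\omega^\omega\bigl(z\text{ is Cauchy in }\la\omega,d'_0\ra\imp g\circ z\text{ is Cauchy in }\la\omega,d'_1\ra\bigr),\]
whose interior is a Boolean combination of $\cosig^0_3$ statements by Theorem \ref{codesamecontfunctAnalytic}(a), hence arithmetic; the leading $\forall z$ makes (iv) a $\cosig^1_1$ relation. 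The same analysis gives (v) as $\cosig^1_1$ (note that, once $g$ is a bijection, $g^{-1}$ is definable arithmetically from $g$).

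Finally I would collect quantifiers. The matrix is a conjunction of arithmetic clauses, two $\sig^1_1$ clauses, and two $\cosig^1_1$ clauses; writing the $\sig^1_1$ clauses as $\exists y_1,y_2(\ldots)$ and the $\cosig^1_1$ clauses as $\forall w(\ldots)$ (combining two universal quantifiers into one), the whole conjunction becomes $\exists y_1\exists y_2\forall w\,\varphi$ with $\varphi$ arithmetic. Prefixing the quantifier $\exists(d'_0,d'_1,g)$ from the characterization and absorbing $y_1,y_2$ into that block yields
\[\exists(d'_0,d'_1,g,y_1,y_2)\,\forall w\,\varphi,\]
which is manifestly $\sig^1_2$. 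The only subtle point, and the one I would double-check most carefully, is clause (iv): I need the implication between two $\cosig^0_3$ statements to remain Borel (it is, being $\sig^0_3\vee\cosig^0_3$) so that the universal quantifier over $z$ yields $\cosig^1_1$ rather than something worse. Once that verification is in place, the $\sig^1_2$ bound drops out from the quantifier rearrangement described above.
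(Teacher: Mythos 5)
Your proposal is correct and follows essentially the same route as the paper: the paper likewise plugs the characterization of Theorem \ref{codesamePolish} into the complexity bounds of Theorems \ref{codesamePolishAnalytic} and \ref{codesamecontfunctAnalytic}, observing that Cauchy-continuity of $g$ and $g^{-1}$ is co-analytic and $\leqdi$ is analytic, so the outer existential quantifier yields $\sig^1_2$. Your write-up merely supplies the quantifier-counting details that the paper leaves as ``it is easy to see.''
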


\subsection*{Acknowledgements} This paper was produced for the conference proceedings of the RIMS Workshop on Mathematical Logic and Its Applications which was held in the last week of September of 2016. The author is very thankful with professor Makoto Kikuchi for organizing this great workshop and for letting him participate as a speaker. The author also wants to thank Miguel Cardona for pointing out the last example in Section \ref{SecPolish}.

\bibliography{left}
\bibliographystyle{alpha}


\end{document}